\newcommand{\C}{\mathbb{C}}
\newcommand{\Z}{\mathbb{Z}}
\newcommand{\Q}{\mathbb{Q}}
\newcommand{\G}{\mathbb{G}}
\newcommand{\Br}{\operatorname{Br}}
\newcommand{\m}[2]{\mu_{#1}^{\otimes #2}}
\DeclareFontFamily{U}{wncy}{}
\DeclareFontShape{U}{wncy}{m}{n}{<->wncyr10}{}
\DeclareSymbolFont{mcy}{U}{wncy}{m}{n}
\DeclareMathSymbol{\Sha}{\mathord}{mcy}{"58} 
\newtheorem{thm}{Theorem}
\newtheorem*{thm*}{Theorem}
\newtheorem{cor}[thm]{Corollary}
\newtheorem{lem}[thm]{Lemma}
\newtheorem{prop}[thm]{Proposition}
\newtheorem*{ack*}{Acknowledgements}
\theoremstyle{remark}
\let\oldproofname=\proofname
\renewcommand{\proofname}{\rm\bf{\oldproofname}}
\DeclareSymbolFont{cyrletters}{OT2}{wncyr}{m}{n}
\DeclareMathSymbol{\Sha}{\mathalpha}{cyrletters}{"58}
\title{Degree $2$ Del Pezzo Surface Bundles And Stable Rationality}
\author{Wenhao Li}
\address{Courant Institute of Mathematical Sciences, New York University, New York, 10012, U.S.A.}
\email{wl1693@nyu.edu}
\begin{document}

\begin{abstract}

We study the arithmetic
of del Pezzo surfaces $Y$ of degree 2 over a function field, and in particular, the cokernel of the homomorphism from the Picard group to the Galois-invariants of the geometric Picard group
$\operatorname{Pic} Y \rightarrow(\operatorname{Pic} \bar{Y})^{G}$. Applying this to a fibration $\pi:X\to S$ in del Pezzo surfaces of degree 2 over a rational surface $S$, we construct examples with nontrivial  relative unramified cohomology group $H^2_{nr,\pi}(k(X)/k)$. A specialization argument implies the failure of stable rationality of varieties specializing to $X$. 
\end{abstract}

\maketitle

\section{Introduction}

    Recently, there has been dramatic progress in the study of rationality properties of 
  higher-dimensional varieties over uncountable algebraically closed fields.  Specialization arguments allowed to show the failure of stable rationality  
for several important classes of smooth projective varieties. These arguments are based on a careful degeneration of the variety of interest to a (mildly singular) {\it reference} variety $X$.
Suitable (stable) birational invariants of $X$ can be used to show the 
    failure of stable
    rationality of the original variety (examples include 
     \cite{CTP16A}, \cite{CTP16B}, \cite{HKT16}, \cite{T16}, \cite{HPT18}, \cite{ABP18}, \cite{S18}, \cite{S19A}, \cite{S19B}, \cite{O19}, \cite{KO20}, \cite{NO22},  \cite{P23}, and others). 
   In particular, recent results include proofs of failure of stable rationality of very general hypersurfaces  in $\mathbb{P}^2\times\mathbb{P}^3$ of bidegree (2,2), (2,3), (3,3) and ($d$,2) for $d\geq3$. Work of Ahmadinezhad and Okada in \cite{AO16}, along with \cite{HPT18} and \cite{ABP18}, shows that a very general hypersurface of bidegree (2,$d$) in $\mathbb{P}^2\times\mathbb{P}^3$ is not stably rational.

    Here, we are interested in reference varieties of dimension $4$. Papers of Hassett, Pirutka, and Tschinkel  \cite{HPT18} and \cite{P23} contain  examples of reference varieties, e.g., fibrations in quadric or cubic surfaces over $\mathbb{P}^2$. Here we extend these results to another class of fibrations in (geometrically) rational surfaces over $\mathbb{P}^2$: fibrations in del Pezzo surfaces of degree $2$. We construct a new reference variety of this type, see  Proposition \ref{refv}. 
    
    Throughout this note, $k$ is an uncountable algebraically closed field of characteristic not equal to 2.   

The birational invariants we study are the relative unramified cohomology groups $H^i_{nr,\pi}(k(X)/k)$. These are  certain subgroups of the unramified cohomology groups $H^i_{nr}(k(X)/k)$, determined by 
a fibration $\pi:X\to S$. 
The general method for computing unramified cohomology for fibrations goes back to Colliot-Th\'{e}l\`{e}ne and Ojanguren in \cite{CTO89}, who generalized the example of Artin and Mumford in \cite{AM72}. 

The specialization method was  introduced by Voisin in \cite{V15} and developed by Colliot-Th\'{e}l\`{e}ne and Pirutka in \cite{CTP16A}.  
We will use a refinement by Schreieder \cite{S19A} which gives  more freedom in constructing examples, without analysis of singularities of the reference variety, at the cost of additional restrictions for unramified classes; these restrictions will be satisfied in our case.
    
    Using the reference variety we construct, a direct application of the specialization method yields the following result (see Corollary \ref{coromain}), which was one of the cases obtained by Krylov and Okada in \cite{KO20} using a different proof:

    \begin{thm} 
   Let $k$ be an uncountable algebraically closed field of char $(k) \neq 2$. A double cover of $\mathbb{P}^2\times \mathbb{P}^2$ branched along a very general divisor of bidegree $(2q,4)$is not stably rational for $q\geq 4$.   
    \end{thm}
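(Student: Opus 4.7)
The plan is to exhibit the double cover $Y$ of $\mathbb{P}^2\times\mathbb{P}^2$ branched along a very general bidegree $(2q,4)$ divisor as the total space of a smooth fibration in del Pezzo surfaces of degree $2$ over $\mathbb{P}^2$, and then to apply the specialization method with the reference variety constructed in Proposition~\ref{refv}. Concretely, composition with the first projection yields a morphism $\pi:Y\to\mathbb{P}^2$ whose fiber over a general point $s\in\mathbb{P}^2$ is the double cover of $\{s\}\times\mathbb{P}^2\cong\mathbb{P}^2$ ramified along the plane quartic obtained by restricting the branch divisor, i.e., a smooth del Pezzo surface of degree $2$. This is exactly the setup to which Proposition~\ref{refv} and the relative unramified cohomology machinery apply.

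The first step I would carry out is to embed the reference variety $X$ of Proposition~\ref{refv} into the linear system $|\mathcal{O}(2q,4)|$ on $\mathbb{P}^2\times\mathbb{P}^2$ for each $q\geq 4$. If the explicit branch locus of $X$ has bidegree $(2q_0,4)$ for the minimal admissible $q_0$ coming from the construction, one multiplies its bihomogeneous defining form by an arbitrary generic form of bidegree $(2(q-q_0),0)$ pulled back from the first factor. The resulting divisor lies in $|\mathcal{O}(2q,4)|$ and presents a (possibly more singular) reference variety in the same stable-birational class that still carries the nontrivial relative unramified class from Proposition~\ref{refv}; crucially, Schreieder's version of the specialization method does not require a fine singularity analysis here. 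This produces a flat family $\mathcal{Y}\to B$ over $B=|\mathcal{O}(2q,4)|$ whose very general fiber is the $Y$ of the theorem and which contains a variety built from $X$ as one of its special fibers.

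It remains to invoke the specialization theorem in the form of Schreieder~\cite{S19A}. Given the nonvanishing of $H^2_{nr,\pi}(k(X)/k)$ supplied by Proposition~\ref{refv}, together with the fact that the class produced is of the kind required by Schreieder's refinement, the conclusion is that a very general member of $\mathcal{Y}\to B$ is not stably rational, which is exactly the statement of the theorem. The most delicate step is the first one: tracking the bidegree of the branch locus of $X$ and confirming that the construction of Proposition~\ref{refv} can be arranged to produce a bidegree $(2q,4)$ divisor for every $q\geq 4$, with the lower bound $q\geq 4$ emerging as the combinatorial threshold imposed by the construction. Everything else is a direct application of the general machinery summarized in the introduction.
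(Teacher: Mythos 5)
Your overall strategy is exactly the paper's: view the double cover as a fibration in degree-$2$ del Pezzo surfaces via the first projection, produce a reference variety of bidegree $(2q,4)$ from Proposition \ref{refv}, and conclude by the specialization theorem (Proposition \ref{NSR}).

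The one step that does not work as written is your degree adjustment. You propose to multiply the bihomogeneous form $f$ defining the branch divisor of the reference variety by an \emph{arbitrary generic} form $h$ of bidegree $(2(q-q_0),0)$ and assert that the resulting double cover lies in the same stable birational class. That is false for non-square $h$: the double covers $w^2=f$ and $w^2=hf$ are quadratic twists of one another and are not birational unless $h$ is a square in the function field, so the nontrivial relative unramified class does not automatically transfer. The paper sidesteps this by building the degree shift into Proposition \ref{refv} itself: the defining equation carries the factor $z^m$ with $m$ \emph{even}, i.e., the multiplier is the square $(z^{m/2})^2$, so no new variety and no new computation are needed. The fix to your argument is immediate (take $h$ to be a square, or just invoke the parameter $m$), but as stated the claim is wrong. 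You also leave the threshold $q\geq 4$ unverified, whereas it falls out concretely from the construction: the bidegree is $(4g+m,4)$, the hypotheses force $n$ even with $0<n<2g$ (so that $(A,B)\neq 0$), hence $g\geq 2$ and $2q=4g+m\geq 8$, and the choice $g=2$, $n=2$, $m=2q-8\geq 0$ realizes every $q\geq 4$.
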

    
\subsection*{Acknowledgements}
    The author was partially supported by NSF grant DMS-2201195. The Author is very grateful of Alena Pirutka's thorough guidance throughout this project, and would also like to thank Asher Auel, Fedor Bogomolov, Anthony V\'{a}rilly-Alvarado, and Zhijia Zhang for helpful discussions and suggestions.

\section{Relative unramified cohomology groups}

Let $K=k(X)$ be the function field of an integral algebraic variety $X$ over $k$, $n>1$ an integer invertible in $K$, and $\mu_n$ the \'{e}tale sheaf defined by the $X$-group scheme of $n$-th roots of unity. When $j$ is a positive integer, $\mu_n^{\otimes j}$ is the tensor product of $j$ copies of $\mu_{n}$. If $j=0$, $\mu_n^{\otimes 0}=\Z/n$, and if $j<0$ then $\mu_n^{\otimes j}=\operatorname{Hom}\left(\mu_n^{\otimes -j},\Z/n\right)$. Denote by $H^i(K,\mu_n^{\otimes j})$ the $i$th Galois cohomology group.  In particular,  $H^2(K,\mu_n)=\Br K[n]$.

In this note we will be mostly interested in the case $i=2$ and $n=2$ and we will write $H^2(K,\Z/2\Z)$ for the corresponding Galois cohomology group.

Consider a divisorial discrete valuation $v$ on $K$, trivial on $k$. Let  $$R\coloneqq \left\{ x\in K|v(x)\geq0\right\}$$ be the valuation ring, and let $\kappa(v)$ be the residue field. There is a residue homomorphism

$$
\partial_v^i: H^i(K,\mu_n^{\otimes j})\rightarrow H^{i-1}(\kappa(v),\mu_n^{\otimes(j-1)}),
$$
which factors through the completion $K_v$ of $K$ at $v$. In the case $i=2$ and $n=2$  we have
\begin{equation}\label{res2}
\partial_v^2(a, b)=(-1)^{v(a) v(b)} \overline{\frac{a^{v(b)}}{b^{v(a)}}},
\end{equation}
where $(a, b):=a \cup b \in H^2\left(K, \Z/2\Z\right)$. 

Here $\overline{\frac{a^{v(b)}}{b^{v(a)}}}$ denotes the image of $\frac{a^{v(b)}}{b^{v(a)}}$ in $\kappa(v)^\times/\kappa(v)^{\times 2}$. 
See \cite{CT92}, \cite{CTS21} for  more details on the residue homomorphism.

In particular, there is an identification \cite[$\S 3.3$]{CT92}:
$$
\operatorname{Ker}(\partial_v^i)=\operatorname{Im}\left[H^i_{\acute{e}t}(R,\mu_n^{\otimes j})\rightarrow H^i(K,\mu_n^{\otimes j})\ \right].
$$
Define the $i$-th unramified cohomology of $K$ over $k$ to be the intersection of kernels of residue maps of all 
divisorial  discrete valuations $v$ of $K=k(X)$
trivial on $k$ \cite{CT92}:
$$
H^i_{nr}(k(X)/k,\mu_n^{\otimes j})\coloneqq\bigcap_v\operatorname{Ker}(\partial^i_v).$$

Recall that the Brauer group of an integral algebraic variety $X$ is 
$$
\Br X\coloneqq H^2_{et}(X,\G_m).
$$
If $X$ is smooth, $\Br X$ maps injectively into $\Br k(X)$ \cite{SGA5}. If $X$ is smooth and projective, and $n$ is invertible in $k$, then 
$$
\Br X[n] \cong H^2_{nr}(k(X)/k,\mu_n),
$$ 
see \cite{CT92}.

Let $S$ be a smooth projective integral variety over $k$, and put $L=k(S)$. Let $X$ be an integral projective variety with a dominant morphism $\pi: X\rightarrow S$, and $X_L$ or $X_{k(S)}$ the generic fiber of $\pi$. Define the $n$-th relative unramified cohomology group:
\begin{multline*}
H^i_{nr,\pi}(k(X)/k,\mu_n^{\otimes j})\coloneqq
\operatorname{Im}[H^i(k(S),\m{n}{j})\rightarrow H^i(k(X),\m{n}{j})]\bigcap\\ \cap_x \operatorname{Ker}[H^i(k(X),\m{n}{j})\rightarrow H^i(L_x(X_{L}),\m{n}{j})],
\end{multline*}
where $x$ runs over all scheme points of $S$ of positive codimension, and $L_x$ is the field of fractions of the completed local ring $\hat{\mathcal O}_{S,x}$ \cite{P23}.
In particular:

\begin{prop}\label{relnr} \cite{P23}
One has
    $$H^i_{nr,\pi}(k(X)/k,\mu_n^{\otimes j})\subset H^i_{nr}(k(X)/k,\mu_n^{\otimes j}).$$
\end{prop}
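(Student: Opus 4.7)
The plan is to fix an arbitrary $\alpha \in H^i_{nr,\pi}(k(X)/k,\mu_n^{\otimes j})$ and a divisorial discrete valuation $v$ of $k(X)/k$, and show directly that $\partial_v^i\alpha=0$. By definition, $\alpha = \pi^*\beta$ for some $\beta \in H^i(k(S),\mu_n^{\otimes j})$, and $\alpha$ vanishes in $H^i(L_x(X_L),\mu_n^{\otimes j})$ for every scheme point $x \in S$ of positive codimension. I would split into two cases according to whether the restriction $w := v|_{k(S)}$ is trivial.

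First, if $w$ is trivial, then every nonzero element of $k(S)$ has $v$-value $0$, so $k(S) \subset R_v$, and $\beta$ lifts to $H^i_{et}(R_v,\mu_n^{\otimes j})$ via the morphism $\operatorname{Spec}(R_v) \to \operatorname{Spec}(k(S))$. Hence $\alpha = \pi^*\beta$ lies in the image of $H^i_{et}(R_v,\mu_n^{\otimes j}) \to H^i(k(X),\mu_n^{\otimes j})$, which equals $\ker \partial_v^i$ by the identification recalled in the text. Thus $\partial_v^i\alpha=0$ in this case.

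If $w$ is nontrivial, then $w$ is a rank-$1$ discrete valuation on $k(S)/k$, and since $S$ is projective, $w$ admits a unique center $x$ which, by nontriviality, has positive codimension. The residue map $\partial_v^i$ factors through $H^i(K_v,\mu_n^{\otimes j})$, where $K_v$ is the completion of $k(X)$ at $v$, so it suffices to show that $\alpha$ vanishes in $H^i(K_v,\mu_n^{\otimes j})$. To that end, I would construct a ring homomorphism $L_x(X_L) \to K_v$ factoring the natural inclusion $k(X) \hookrightarrow K_v$: the local map $\mathcal O_{S,x} \to R_v$ induced by $\pi$ is continuous for the $\mathfrak m_x$-adic topology at the source and the $\mathfrak m_v$-adic topology at the target (since $\mathfrak m_x = \mathfrak m_v \cap \mathcal O_{S,x}$ by the definition of center), so it completes to $\hat{\mathcal O}_{S,x} \to \hat R_v$; passing to fraction fields and invoking the geometric integrality of the generic fiber $X_L$ to see that $L_x \otimes_{k(S)} k(X)$ is a domain with fraction field $L_x(X_L)$, one obtains the required factorization. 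The hypothesis $\alpha|_{L_x(X_L)} = 0$ then forces $\alpha$ to vanish in $H^i(K_v,\mu_n^{\otimes j})$, giving $\partial_v^i\alpha=0$.

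The main technical difficulty will appear in the nontrivial case when $x$ has codimension $\geq 2$ and $w$ is not divisorial on $k(S)$: the completion map $\hat{\mathcal O}_{S,x} \to \hat R_v$ need not be injective (for instance, when $w$ encodes a formal branch through $x$ that does not come from an algebraic subvariety of $S$), which obstructs the direct extension to fraction fields. Resolving this will likely require either reducing to the codimension-$1$ case by choosing a birational model $S' \to S$ on which $w$ becomes divisorial and tracking the image point in $S$, or a careful argument with the quotient $\hat{\mathcal O}_{S,x}/\mathfrak p$ using that the kernel $\mathfrak p$ contracts to zero in $\mathcal O_{S,x}$.
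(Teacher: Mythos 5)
Your argument follows the same route as the paper's (which defers the details to \cite{P23}): split according to whether $v$ restricts trivially to $k(S)$, and in the nontrivial case use the embedding $L_{x_v}(X_L)\hookrightarrow k(X)_v$ of diagram (\ref{dP23}) together with the factorization (\ref{resfac}) of the residue map through the completion. The structure and both cases are right.

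The difficulty you flag at the end does not actually arise, because the valuations in the definition of $H^i_{nr}$ are \emph{divisorial}. If $v$ is divisorial on $k(X)$ and $w=v|_{k(S)}$ is nontrivial, then both value groups are isomorphic to $\Z$, so Abhyankar's inequality for the extension $k(X)/k(S)$ gives $\operatorname{trdeg}(\kappa(v)/\kappa(w))\le \operatorname{trdeg}(k(X)/k(S))=\dim X-\dim S$, while $\operatorname{trdeg}(\kappa(v)/k)=\dim X-1$; hence $\operatorname{trdeg}(\kappa(w)/k)\ge \dim S-1$, and Abhyankar's inequality for $w$ on $k(S)/k$ forces equality, so $w$ is itself divisorial on $k(S)$. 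The transcendental-branch pathology you describe therefore cannot occur. For a divisorial $w$ centered at $x$, the injectivity of $\hat{\mathcal O}_{S,x}\to\hat R_v$ (equivalently, of $\hat{\mathcal O}_{S,x}\to\hat R_w$) does still need a word: one can write $R_w=\mathcal O_{S',E}$ for a model $S'\to S$ obtained by a chain of blow-ups and check injectivity of the completed local maps at each step. Note also that your first proposed fix --- replacing $S$ by a model $S'$ on which $w$ becomes a divisor --- is not available as stated, since the vanishing hypothesis in the definition of $H^i_{nr,\pi}$ is quantified over the scheme points of $S$, not of $S'$; what the argument needs is precisely the vanishing over $L_x$ for the center $x\in S$, which is what the definition supplies.
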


The proof uses the following commutative diagram
\begin{equation}\label{dP23}
 \begin{tikzcd}
L(X_L) \arrow[r,hook] & L_{x_{v}}(X_L)\arrow[r,hook] &L(X_L)_v=k(X)_v \\
L \arrow[r,hook]\arrow[u,hook]& L_{x_{v}}\arrow[u,hook]
\end{tikzcd}   
\end{equation}
and the factorization 
\begin{equation}\label{resfac}
\partial_v^i: H^i\left(k(X), \mu_n^{\otimes j}\right) \rightarrow H^i\left(k(X)_v, \mu_n^{\otimes j}\right) \rightarrow H^{i-1}\left(\kappa(v), \mu_n^{\otimes(j-1)}\right).
\end{equation}
We will return to (\ref{dP23}) below.

\section{Example of a Reference Variety}

\subsection{Notation}

Let $S$ be a smooth projective rational surface over $k$.

Let $L=k(S)$ be the function field; in our example, $S=\mathbb{P}^2$ and $k$ can be $\C$, so that $L=\C(x,y)$. Let $X$ be a smooth projective variety fibered over $S$ with generic fiber $X_L$ a smooth del Pezzo surface of degree 2 over $L$. Note that $L(X_L)=k(X)$.

Let $v$ be a valuation on $L(X_L)=k(X)$, trivial on $k$. Denote by $x_v$ the center of $v$ on $S$: it is the image of the closed point of $\operatorname{Spec}R$ in $S$, where $R$ is the valuation ring. Let $k(X)_v$ be the completion, and $L_{x_v}$  the field of fractions of the completed local ring $\hat{O}_{S,x_v}$.

Following \cite{CTO89}, \cite{P23}, we are interested in nonzero elements in $$H^2_{nr,\pi}(k(X)/k,\Z/2\Z).$$
By Proposition \ref{relnr}, this is a stronger condition than requiring that the elements are unramified. This is a sufficient condition in order to use the refinement of the specialization method by Schreieder, see \cite{P23}.

\subsection{Key properties} \cite{CTO89}, \cite{P23} 

To construct elements in relative unramified cohomology groups, 
we look at $\alpha\in H^2(L,\Z/2\Z)=\operatorname{Br}L[2]$ and its image $\alpha^\prime\in H^2(L(X_L),\Z/2\Z)$.
We need to compute: 
\begin{itemize}
    \item $\operatorname{Ker}[H^2(L,\Z/2\Z)\rightarrow H^2(L(X_{L}),\Z/2\Z)]$. This will allow us  to insure that $\alpha'\neq 0$.
    \item $\operatorname{Ker}[H^2(L_{x_v},\Z/2\Z)\rightarrow H^2(L_{x_v}(X_{L}),\Z/2\Z)]$ for $x_v\in S$ a point of positive codimension. This will allow  us to deduce that the image of $\alpha'$ in $H^2(L_{x_v}(X_{L}),\Z/2\Z)$ is zero.
\end{itemize}

\subsection{The reference variety}
In the following propsition, we let $L$ be an arbitrary field of characteristic not equal to 2 and  containing eights roots of unity. Consider the degree 2 del Pezzo surface $Y\subset \mathbb P(1,1,1,2)$ defined by 
$$
Y: \;w^2=Au^4+Bv^4+Ct^4.
$$

\begin{prop}\label{ABCKer}
    If $C=ABd^2$ for some $d\in L^\times/(L^\times)^2$ and if the class  $(A,B)\in\Br L[2]$ is nonzero, then the kernel of the map $$\operatorname{Br} L \rightarrow \operatorname{Br} L(Y)$$ is $\Z/2\Z$, generated by $(A,B)$.
    If $ABC$ is not a square, the map $$
\operatorname{Br} L \rightarrow \operatorname{Br} L(Y)$$ is injective.
\end{prop}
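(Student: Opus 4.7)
The plan is to prove the two assertions in parallel. The idea is to (i) directly exhibit $(A,B)$ as an element of the kernel in the first setting, and (ii) bound the kernel from above by comparison with the intermediate quadric surface $Q\subset\mathbb{P}^3_L$ defined by $w^2=Ax^2+By^2+Cz^2$.

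For (i): On $L(Y)$, the defining equation rearranges as $w^2-Bv^4=A(u^4+Bd^2t^4)$. Since $L$ contains eighth roots of unity, $i=\sqrt{-1}\in L$, and both sides factor as norms from the quadratic extension $L(Y)(\sqrt{B})/L(Y)$:
\[
w^2-Bv^4=N\bigl(w+v^2\sqrt{B}\bigr),\qquad u^4+Bd^2t^4=N\bigl(u^2+i\,d\,t^2\sqrt{B}\bigr).
\]
Dividing presents $A$ as a norm from $L(Y)(\sqrt{B})/L(Y)$, so $(A,B)=0$ in $\Br L(Y)$.

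For (ii): The substitution $(x,y,z)=(u^2,v^2,t^2)$ identifies $L(Q)$ with a subfield of $L(Y)$, and $L(Y)/L(Q)$ is biquadratic, generated by $\sqrt{x}=u$ and $\sqrt{y}=v$. First I compute $\ker(\Br L\to\Br L(Q))$: the underlying quadratic form $\langle-1,A,B,C\rangle$ has discriminant $-ABC$, which is a square in $L$ iff $ABC$ is (since $-1\in(L^\times)^2$). In the case $C=ABd^2$ the form is isomorphic to the anisotropic $2$-fold Pfister form $\langle\langle A,B\rangle\rangle$, and the kernel is $\langle(A,B)\rangle$; when $ABC$ is not a square the discriminant is nontrivial and the kernel vanishes. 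It remains to identify $\ker(\Br L\to\Br L(Y))$ with $\ker(\Br L\to\Br L(Q))$. Given $\alpha$ in the former, $\alpha|_{L(Q)}$ lies in the biquadratic kernel $\ker(\Br L(Q)\to\Br L(Y))$, which is generated by symbols $(x,f)$ and $(y,g)$ with $f,g\in L(Q)^\times$. Unramifiedness of $\alpha|_{L(Q)}$ on $Q$ (since it is pulled back from $L$), combined with residue analysis at the smooth conics $\{x=0\}$ and $\{y=0\}$ on $Q$, forces $\alpha|_{L(Q)}=0$.

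\textbf{Main obstacle.} The descent from $\ker(\Br L\to\Br L(Y))$ to $\ker(\Br L\to\Br L(Q))$ is the delicate step. Biquadratic kernels in $\Br$ are considerably larger than single-quadratic kernels, so ruling out nontrivial decompositions $\alpha|_{L(Q)}=(x,f)+(y,g)$ for a class originating in the base field requires careful tracking of residues at multiple divisors on $Q$, together with a Faddeev-style chase. I expect this is where the bulk of the technical work lives.
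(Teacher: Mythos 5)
Your step (i) is correct and coincides with the paper's opening computation up to exchanging the roles of $A$ and $B$: the paper exhibits $B=(w^2-Au^4)/(v^4+Ad^2t^4)$ as a norm from $L(Y)(\sqrt{A})$, you exhibit $A$ as a norm from $L(Y)(\sqrt{B})$; either way $(A,B)$ dies in $\operatorname{Br}L(Y)$. Your computation of $\operatorname{Ker}(\operatorname{Br}L\to\operatorname{Br}L(Q))$ for the auxiliary quadric is also standard and correct (the discriminant of $\langle-1,A,B,C\rangle$ is trivial iff $ABC$ is a square, since $i\in L$, and the kernel is then generated by the Clifford invariant $(A,B)$).

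The genuine gap is the step you yourself flag as the main obstacle: showing $\operatorname{Ker}(\operatorname{Br}L\to\operatorname{Br}L(Y))\subseteq\operatorname{Ker}(\operatorname{Br}L\to\operatorname{Br}L(Q))$, i.e. that $\operatorname{Im}(\operatorname{Br}L\to\operatorname{Br}L(Q))\cap\operatorname{Ker}(\operatorname{Br}L(Q)\to\operatorname{Br}L(Y))=0$. This is not carried out, and the route you sketch for it rests on a false premise: for a biquadratic extension $M(\sqrt{a},\sqrt{b})/M$ the relative Brauer group is in general strictly larger than the subgroup generated by quaternion symbols $(a,f)$ and $(b,g)$ (Elman--Lam--Tignol--Wadsworth constructed counterexamples to such decomposability), and since $L$ in the proposition is an essentially arbitrary field (later specialized to $k(x,y)$ and to the completions $L_{x_v}$), you cannot assume it for $L(Y)/L(Q)$. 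So both the decomposition $\alpha|_{L(Q)}=(x,f)+(y,g)$ and the subsequent residue chase are unjustified, and the upper bound on the kernel --- which is the entire content of the proposition beyond step (i) --- remains unproved. The paper takes a different route that avoids this: it uses the exact sequence $0\to\operatorname{Pic}Y\to(\operatorname{Pic}\bar{Y})^{G}\to\operatorname{Br}L\to\operatorname{Br}L(Y)$, which identifies the kernel with the cokernel of $\operatorname{Pic}Y\to(\operatorname{Pic}\bar{Y})^{G}$, and computes the Galois action on the $56$ exceptional curves explicitly following Kresch--Tschinkel. When $ABC$ is not a square the invariant lattice is $\Z(-K_Y)$ and injectivity is immediate; when $C=ABd^2$ the invariants have rank $2$ and the cokernel is pinned down to $\Z/2\Z$ using the known classification of possible kernels for degree-$2$ del Pezzo surfaces (Proposition 5.3 of [CTKM07]) together with the explicit Galois-stable class $2\mu$. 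If you want to keep the quadric comparison, you must supply an actual proof of the descent from $L(Q)$ to $L(Y)$; otherwise the argument should be replaced by a lattice computation of $(\operatorname{Pic}\bar{Y})^{G}$ as in the paper.
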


\begin{proof}
    We first  prove that if $C=ABd^2$, then $(A,B)$ is indeed in the kernel. Note that
    $$
    B=\frac{w^2-Au^4}{v^4+Ad^2t^4}=N_{L(Y)(\sqrt{A})/L(Y)}(\beta),
    $$ 
    where $\beta=\frac{w-\sqrt{A}u^2}{v^2+i\sqrt{A}dt^2}$. If $A$ is a square in $L(Y)$, then $(A,B)=0$ in $L(Y)$. If not, $A$ is a square in $L(Y)(\sqrt{A})$ and $$(A,B)=(A,N_{L(Y)(\sqrt{A})/L(Y)}(\beta))=N_{L(Y)(\sqrt{A})/L(Y)}(A,\beta)=0.$$
    The remaining statements follow from Lemma \ref{Abd2} and Lemma \ref{ABC} below.
\end{proof}

\medskip

\begin{prop}\label{refv}
    Let $k$ be an algebraically closed field of char $(k) \neq 2$.
    Let $F\in k[x,y,z]$ be a homogeneous polynomial of degree $2g$,
    a square of a polynomial of degree $g\geq1$. Let $n<2g$ be an even integer,
    $A$ a product of $n$ linear forms $l_1,\ldots,l_n$, and $B$ a product of $2g-n$ linear forms $l_{n+1},\ldots,l_{2g}$, where 
    \begin{itemize}
        \item [(i)] the lines defined by the linear forms
        $$\{l_1,\ldots l_{2g}\}$$ are pairwise distinct;
        \item [(ii)] no three of these lines are concurrent;
        \item [(iii)] the zero locus of $F$ does not contain any of these lines;
        \item [(iv)] $F$ does not vanish at any point of intersection of a linear factor of $A$ and a linear factor of $B$;
        \item [(v)] $AB+F$ is not a square of a homogeneous polynomial in $k[x,y,z]$.
    \end{itemize}
    Let $m$ be an even non-negative integer and 
    $$
    X\subset \mathbb{P}^2_{[x:y:z]}\times \mathbb{P}(1,1,1,2)_{[u:v:t:w]}
    $$ 
    a hypersurface of bidegree $(4g+m,4)$ defined by 
$$
w^2=Az^{4g+m-n}u^4+Bz^{2g+m+n}v^4+ABz^m(AB+F)t^4.
$$
Let $\pi: X\rightarrow \mathbb{P}^2$ be the projection and $L=k(\mathbb{P}^2)$. Then there exists an element $\alpha\in H^2(L,\Z/2\Z)=\operatorname{Br}L[2]$ such that its image $\alpha^\prime\in H^2(k(X),\Z/2\Z)$ is not zero and $\alpha^\prime\in H^2_{nr,\pi}(k(X)/k,\Z/2\Z)$.
\end{prop}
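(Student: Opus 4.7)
The plan is to take $\alpha = (A,B) \in H^2(L,\Z/2\Z) = \Br L[2]$. The generic fiber $X_L$ is the degree $2$ del Pezzo $Y: w^2 = Au^4 + Bv^4 + AB(AB+F)t^4$ in $\mathbb{P}(1,1,1,2)_L$, so the coefficients of Proposition \ref{ABCKer} are $A$, $B$, $C = AB(AB+F)$, and $ABC \equiv AB+F$ in $L^\times/(L^\times)^2$. Writing $\alpha'$ for the image of $\alpha$ in $H^2(k(X), \Z/2\Z)$, I need to verify (a) $\alpha \neq 0$ in $H^2(L,\Z/2\Z)$, (b) $\alpha' \neq 0$ in $H^2(k(X),\Z/2\Z)$, and (c) for every scheme point $x \in S = \mathbb{P}^2$ of positive codimension, $\alpha'$ dies in $H^2(L_x(X_L),\Z/2\Z)$; membership of $\alpha'$ in the image of $H^2(L) \to H^2(k(X))$ is automatic from the construction.

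Step (a) is a residue computation. Using formula (\ref{res2}) at the prime divisor $D = V(l_1)$, one has $v_D(A) = 1$ and $v_D(B) = 0$, so $\partial_D(A,B) = \overline{B|_D}^{-1}$ in $\kappa(D)^\times/\kappa(D)^{\times 2}$. By conditions (i) and (ii), $B|_D$ is a product of $2g - n \geq 1$ pairwise distinct nonzero linear forms on $D \cong \mathbb{P}^1$, hence not a square. For (b), condition (v) says that $AB + F$ is not a square in $k[x,y,z]$, hence not in $L$, so $ABC$ is not a square in $L$ and the second part of Proposition \ref{ABCKer} gives injectivity of $\Br L \to \Br L(X_L)$, forcing $\alpha' \neq 0$.

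The bulk of the work is step (c), handled by a case analysis on $x$. For a codimension-one point $x = D$: if $D$ is not one of the lines $V(l_i)$, then $A$ and $B$ are units at $D$, so $(A,B)$ is unramified at $D$; by Tsen's theorem, $\Br k(D) = 0$, and since $L_x$ is the fraction field of a complete discrete valuation ring with residue field $k(D)$, the unramified subgroup of $\Br L_x$ vanishes. If $D = V(l_i)$, condition (iii) forces $G \not\equiv 0$ on $D$, so $AB + F \equiv G^2 \pmod{l_i}$ is a unit with square residue $(G|_D)^2$; Hensel gives $AB + F \in (L_x^\times)^2$, hence $C \equiv AB$ modulo squares in $L_x$, and the first part of Proposition \ref{ABCKer} places $(A, B)$ in $\ker[\Br L_x \to \Br L_x(X_L)]$. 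For a codimension-two point $x = P$: if $P$ is not the intersection of a line from $A$ with a line from $B$, then at least one of $A$, $B$ is a unit at $P$ in $\hat{\mathcal O}_{S,P}$; since $k$ is algebraically closed, Hensel makes every such unit a square, so $(A, B) = 0$ in $\Br L_x$. Otherwise $P = V(l_i) \cap V(l_j)$ with $l_i \mid A$, $l_j \mid B$, and condition (iv) forces $F(P) \neq 0$; then $AB + F$ is a unit at $P$ with square residue $G(P)^2$, hence a square in $L_x$ by Hensel, and Proposition \ref{ABCKer} kills $(A,B)$ in $\Br L_x(X_L)$.

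The main obstacle is the last sub-case of (c): at an intersection of an $A$-line with a $B$-line, the symbol $(A,B)$ is a genuine nonzero tame symbol in the two-dimensional henselian local field $L_x$, so the vanishing must come from passing to the del Pezzo function field via Proposition \ref{ABCKer}. Conditions (iii) and (iv) are precisely what forces $AB+F$ to be a square in every relevant completion, supplying the $C = AB d^2$ hypothesis needed to invoke the proposition; the other design choice, $F = G^2$, is what allows the Hensel arguments to land on squares rather than arbitrary units.
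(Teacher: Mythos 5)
Your proposal is correct and takes essentially the same route as the paper: the same class $\alpha=(A,B)$, nonvanishing of $\alpha$ via the residue along a line of $A$ and of $\alpha'$ via the non-square case of Proposition \ref{ABCKer}, and the same case analysis on centers $x_v$ (generic points of the $l_i$, closed points meeting only one of $A$, $B$, intersection points of an $A$-line with a $B$-line, and all remaining points), using Hensel's lemma to reduce to the $C=ABd^2$ case of Proposition \ref{ABCKer}.
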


\begin{proof}
We will write $(x,y)$ for the affine coordinates of the chart $z\neq 0$, so that $k(\mathbb{P}^2)=k(x,y)$.
Let 
    $$\alpha=(Az^{4g+m-n},Bz^{2g+m+n})=(A,B)\in H^2(L, \Z/2\Z),
    $$
     where we denote by $A$, $B$ their images in $L$.
    Let us justify that $\alpha'$ is not zero. 

    For any of the lines $l_i$ dividing $A$,
    we have a corresponding discrete valuation which we denote by $v_{l_i}$. By formula (\ref{res2}) we have 
    $$\partial^2_{v_{l_i}}(A,B)=(-1)^{1\times0}\overline{\frac{A^{0}}{B^1}}=\overline B
    $$
    in $\kappa(v_{l_i})^*/\kappa(v_{l_i})^{*2}$.  This is not a square because we have no concurrent lines. 
    This shows that $\alpha\neq 0$.
    Since $AB+F$ is not a square by hypothesis, by the non-square case in Proposition \ref{ABCKer}, we deduce that $\alpha^\prime$ is also not zero.

    We now show that $\alpha^\prime$ is in the relative unramified cohomology group. We have the following cases:
    \begin{itemize}
        \item If $x_{v}\in\mathbb{P}_k^2$ is a generic point of any of the lines defined by $l_i$, then $AB+F$ reduces to a  nonzero square in $\kappa(x_v)$, and, by Hensel's lemma, its image in $L_{x_v}$ is a square. Hence the equation is of the form 
    $$
    w^2=Az^{4g+m-n}u^4+Bz^{2g+m+n}v^4+ABz^mt^4
    $$ 
    in $L_{x_v}$, where $Az^{4g+m-n}Bz^{2g+m+n}=ABz^{6g+2m}$. Since $m$ is even, 
    $$
    6k+2m\equiv m \mod 2.
    $$
    A change of notation gives 
    $$
     w^2=A'u^4+B'v^4+A'B'd^2t^4,
    $$
    where $A^\prime=Az^{4g+m-n}$,  $B^\prime=Bz^{2g+m+n}$, and $d$ can be chosen to be $1$ if $6g+m\equiv0\mod4$ and $z$ if $6g+m\equiv2\mod 4$. This is of the form described in Proposition \ref{ABCKer}. Note that $(A',B')=(A,B)$ and hence
    $$
    \alpha^\prime\in \operatorname{Ker}[H^2(L,\Z/2\Z)\rightarrow H^2(L_{x_v}(X_{L_{x_v}}),\Z/2\Z)],
    $$ 
    by Proposition \ref{ABCKer}.

    \item If $x_v$ is a closed point only on $\{A=0\}$ and not on $\{B=0\}$ (similarly, only on $B$, and not on $A$), then $B$ is a nonzero unit in the local ring of $x_v$ and it becomes a square in $\kappa(x_v)=k$ because $k$ is assumed to be algebraically closed, so that the image of $B$ in $L_{x_v}$ is a square by Hensel's lemma. The image of $\alpha$ in $L_{x_v}$ is therefore zero. Using diagram (\ref{dP23}), we see that $\alpha^\prime$ maps to zero in $H^2(L_{x_v}(X_{L_{x_v}}),\Z/2\Z)$.
    
    \item If $x_v$ is at the intersection of lines, then $AB+F$ also reduces to a 
    nonzero square in $\kappa(x_v)$, and we proceed as in the first case.

    \item If $x_v$ is none of the above, then $A$ and $B$ are units in the local ring of $x_v$. This implies that the image of $\alpha$ in $H^2(L_{x_v}, \Z/2\Z)$ is zero    since it then comes from $H^2_{\acute{e}t}(\hat{\mathcal{O}}_{\mathbb{P}^2,x_v},\Z/2\Z)=H^2(\kappa(x_v),\Z/2\Z)=0$. In fact, either $\kappa(x_v)=k$ is algebraically closed or $\kappa(x_v)$ is the function field in one variable over $k$, hence of cohomological dimension $1$.
    
    \end{itemize}
\end{proof}

The following proposition allows us to use $X$ as a ``reference'' variety:

\begin{prop}\label{NSR}
     Let $k$ be an algebraically closed field of char $(k) \neq 2$. Let $S$ be a smooth projective integral variety over $k$. Let $X$ be an integral projective variety and $\pi: X \rightarrow S$ a dominant map with smooth generic fiber. Assume that there is an integer $i>0$ such that
$$
H_{n r, \pi}^i\left(k\left(X\right) / k, \mathbb{Z} / 2 \mathbb{Z}\right) \neq 0 .
$$
    Let $V$ be a proper geometrically integral variety over a field $K$ which degenerates to $X$. Then $V$ is not stably rational over $K$.
\end{prop}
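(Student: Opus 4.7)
The plan is to deduce the failure of stable rationality of $V$ by invoking the specialization criterion of Schreieder \cite{S19A}, in the relative-unramified formulation developed in \cite{P23}. The whole framework of relative unramified cohomology is engineered precisely so that the hypothesis $H^i_{nr,\pi}(k(X)/k,\Z/2\Z) \neq 0$ automatically verifies Schreieder's additional local conditions, bypassing any detailed analysis of the singularities of $X$.

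First, I would pick a nonzero class $\alpha \in H^i_{nr,\pi}(k(X)/k,\Z/2\Z)$. Proposition \ref{relnr} guarantees that $\alpha$ is unramified on $k(X)$. The defining property of $H^i_{nr,\pi}$ additionally furnishes, for every scheme point $x\in S$ of positive codimension, the vanishing of the restriction of $\alpha$ in $H^i(L_x(X_L),\Z/2\Z)$.

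Next I would verify the refined hypothesis of Schreieder's criterion: for every divisorial valuation $v$ on $k(X)$ centered on the singular locus of $X$, the image of $\alpha$ in $H^i(k(X)_v,\Z/2\Z)$ is zero. Since $\pi$ has smooth generic fiber, the singular locus of $X$ dominates at most a proper closed subset of $S$, so any such $v$ has center $x_v$ on $S$ of positive codimension. Using diagram (\ref{dP23}) together with the factorization (\ref{resfac}), the restriction $H^i(k(X),\Z/2\Z)\to H^i(k(X)_v,\Z/2\Z)$ factors through $H^i(L_{x_v}(X_L),\Z/2\Z)$, where $\alpha$ already vanishes by choice. Applying Schreieder's theorem to a degeneration of $V$ to $X$ then yields that $V$ is not stably rational over $K$.

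The main obstacle, which is mostly bookkeeping rather than conceptual, is to confirm that Schreieder's criterion in its precise form---often phrased as vanishing on exceptional divisors of an alteration or resolution of the total space of the degeneration---is indeed implied by the relative-unramified condition we have verified. The reduction hinges on the observation above: exceptional valuations supported over singular points of $X$ have centers on $S$ of positive codimension, so the vanishing in $H^i(L_{x_v}(X_L),\Z/2\Z)$ forces the vanishing in the completion $H^i(k(X)_v,\Z/2\Z)$ through diagram (\ref{dP23}), and Schreieder's method then runs without any further input.
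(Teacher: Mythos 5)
Your proposal is correct and follows essentially the same route as the paper, whose proof of Proposition \ref{NSR} simply defers to \cite{P23}: there the statement is deduced from Schreieder's criterion \cite{S19A} exactly by the reduction you describe, namely that smoothness of the generic fiber (together with smoothness of $S$) forces every divisorial valuation centered on the singular locus of $X$ to have center of positive codimension on $S$, so the relative vanishing in $H^i(L_{x_v}(X_L),\Z/2\Z)$ propagates through diagram (\ref{dP23}) to the completion $H^i(k(X)_v,\Z/2\Z)$. The only point to make explicit is the precise form of Schreieder's hypothesis (vanishing in the completions at valuations centered on the non-smooth locus), which your argument does verify.
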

\begin{proof}
    See \cite{P23} and references therein.
\end{proof}

We have the following corollary,
which was proved in \cite{KO20} by Krylov and Okada using a different method:
\begin{cor}\label{coromain}
    Let $k$ be an uncountable algebraically closed field of char $(k) \neq 2$. A double cover of $\mathbb{P}^2\times \mathbb{P}^2$ branched along a very general divisor of bidegree $(2q,4)$ is not stably rational for $q\geq 4$.
\end{cor}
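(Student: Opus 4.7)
The plan is to apply the specialization-based obstruction of Proposition~\ref{NSR} by exhibiting the reference variety constructed in Proposition~\ref{refv} as a special member of the family of bidegree $(2q,4)$ double covers of $\mathbb{P}^2\times\mathbb{P}^2$.

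First, I would realize a double cover $V\to \mathbb{P}^2\times\mathbb{P}^2$ branched along a bidegree $(2q,4)$ divisor $\{G=0\}$ as the hypersurface
$$
V:\ w^2 = G(x,y,z;\,u,v,t)
$$
in $\mathbb{P}^2_{[x:y:z]}\times\mathbb{P}(1,1,1,2)_{[u:v:t:w]}$, of bidegree $(2q,4)$ (where $w$ has bidegree $(q,2)$). Projection onto the first factor realizes $V$ as a degree~$2$ del Pezzo surface fibration over $\mathbb{P}^2$.

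Second, I would match the numerology with Proposition~\ref{refv}, whose reference variety has bidegree $(4g+m,4)$. For $q\geq 4$ I take $g=2$, $n=2$, and $m=2q-8$: all parity and positivity constraints are then satisfied, $A$ and $B$ become products of $2$ linear forms each, and $F$ is the square of a homogeneous quadric. Each of the conditions (i)--(v) of Proposition~\ref{refv} is a Zariski open condition on the parameter space of such data, and it is straightforward to exhibit an explicit choice satisfying all of them simultaneously; hence reference varieties $X$ of the required form exist.

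Third, I would construct the degeneration. The parameter space
$$
\mathcal{P} := \mathbb{P}\,H^0\bigl(\mathbb{P}^2\times\mathbb{P}^2,\,\mathcal{O}(2q,4)\bigr)
$$
is an irreducible projective space. A very general point $G_0\in\mathcal{P}$ corresponds to the double cover of interest, while
$$
G_1 := Az^{4g+m-n}u^4 + Bz^{2g+m+n}v^4 + ABz^m(AB+F)t^4
$$
corresponds to a reference variety $X$. The universal family of double covers, restricted to a general line in $\mathcal{P}$ through $G_0$ and $G_1$, gives a flat one-parameter family whose generic fiber is the very general smooth double cover $V$ and whose fiber at $G_1$ is $X$; this realizes $V$ as a degeneration of the kind required by Proposition~\ref{NSR}. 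Invoking Proposition~\ref{refv} to produce a nonzero class in $H^2_{nr,\pi}(k(X)/k,\mathbb{Z}/2\mathbb{Z})$ and then Proposition~\ref{NSR} concludes that the very general bidegree $(2q,4)$ double cover is not stably rational.

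The only delicate point is that the class furnished by Proposition~\ref{refv} lies in the \emph{relative} unramified cohomology $H^2_{nr,\pi}$ rather than merely in $H^2_{nr}$; this is precisely what is needed to invoke Schreieder's refinement of the specialization method (as packaged in Proposition~\ref{NSR}) and thereby to avoid any analysis of the singularities of $X$ along the degeneration.
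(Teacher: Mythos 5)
Your proposal is correct and follows essentially the same route as the paper: the same parameter choices $g=2$, $n=2$, $m=2q-8$ in Proposition~\ref{refv}, followed by a degeneration of the very general bidegree $(2q,4)$ double cover to the reference variety $X$ and an appeal to Proposition~\ref{NSR}. Your write-up simply makes explicit the realization of the double cover as a hypersurface in $\mathbb{P}^2\times\mathbb{P}(1,1,1,2)$ and the pencil in the linear system giving the degeneration, details the paper leaves implicit.
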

\begin{proof}
Let $V\to \mathbb{P}^2\times \mathbb{P}^2$ be a double cover branched along a very general divisor of bidegree $(2q,4)$, as in the statement. We view it as a fibration in del Pezzo surfaces of degree $2$ by the first projection. 
Now construct $X$ as in Proposition \ref{refv}, with the following: put $g=2$, $n=2$,
and $m=2q-8$. So $A$ and $B$ are quadratic, and the homogeneous polynomial $F$ is of degree $4$. The specific choice of the lines and $F$ can be arbitrary, as long as the conditions in Proposition \ref{refv} are satisfied. With $X$ constructed, we can  degenerate $V$ to it. By Proposition \ref{NSR}, $V$ is not stably rational.
\end{proof}

Now we construct a specific simple example using Proposition \ref{refv} and \ref{NSR} above. Let $X\subset \mathbb{P}^2_{[x:y:z]}\times \mathbb{P}(1,1,1,2)_{[u:v:t:w]}$ defined by the following
$$
w^2=Az^6u^4+Bz^6v^4+AB(AB+F)t^4,
$$
where $A=(x^2+xz+z^2)$, $B=(y^2+yz+z^2)$, and $F=(x+y)^4$.
Let $\pi: X\rightarrow \mathbb{P}^2$ the projection to $\mathbb{P}^2$, $L=k(\mathbb{P}^2)=k(x,y)$, where we denote by $x,y$ again the non-homogeneous coordinates of $L$. As in Corollary \ref{coromain}, we conclude that a very general hypersurface of bidegree $(8,4)$ in $\mathbb{P}^2\times \mathbb{P}(1,1,1,2)$ is not stably rational.

\section{Arithmetics of Diagonal del Pezzo Surfaces}
In this section, we prove the rest of Proposition \ref{ABCKer}. We do this by explicit computation of $(\operatorname{Pic}\bar{Y})^G$, 
following Kresch-Tschinkel \cite{KT04}.

Consider the del Pezzo surface $Y$ of degree $2$ defined by 
\begin{equation}\label{defX}
Y:\; w^2=Au^4+Bv^4+Ct^4.
\end{equation}
This is a double cover of $\mathbb{P}^2$ branched over the quartic $$Au^4+Bv^4+Ct^4=0.$$ The surface $Y$ has 56 
(-1) curves, seen as preimages of bitangent lines of the branch quartic. Denote the 4th root of $A,B,C$ by $a,b,c$ respectively. The (-1) curves are given by the following equations \cite{KT04}:

$$
\begin{array}{rrrr}
\ell_{t, \delta, \pm}: & \delta a u+b v=0, & w= \pm c^2 t^2, \quad \text { with } \delta^4=-1, \\
\ell_{u, \delta, \pm}: & \delta b v+c t=0, & w= \pm a^2 u^2, \quad \text { with } \delta^4=-1, \\
\ell_{v, \delta, \pm}: & \delta c t+a u=0, & w= \pm b^2 v^2, \quad \text { with } \delta^4=-1, \\
\ell_{\alpha, \beta, \gamma}: & \alpha a u+\beta b v+\gamma c t=0, & w=\sqrt{2}(\alpha \beta a b u v+\beta \gamma b c v t+\alpha \gamma a c u t), \\
& & \text { with }(\alpha, \beta, \gamma) \in \mu_4^3 / \mu_2 .
\end{array}
$$
The Picard group $\operatorname{Pic}\bar{Y}$ of $\bar{Y}$
is a free abelian group, with generators  \cite{KT04}:
\begin{equation}\label{basis}
\begin{aligned}
& v_1=\left[\ell_{u, \zeta,+}\right], \quad v_2=\left[\ell_{u, \zeta^3,-}\right], \quad v_3=\left[\ell_{v, \zeta,+}\right], \quad v_4=\left[\ell_{v, \zeta^3,-}\right], \\ & v_5=\left[\ell_{t, \zeta,+}\right], \quad v_6=\left[\ell_{t, \zeta^3,-}\right], \quad v_7=\left[\ell_{i, i, i}\right], \quad v_8=\left[\ell_{t, \zeta^7,-}\right]+\left[\ell_{t, \zeta^3,-}\right]+\left[\ell_{i, i, i}\right] . \\ & 
\end{aligned}
\end{equation}

\subsection{The case $C=ABd^2$}

\begin{lem}\label{Abd2}
    Let $Y$ be the del Pezzo surface over the field $L$ defined by
    $$
    Y:\; w^2=Au^4+Bv^4+Ct^4
    $$
    If $C=ABd^2$ for some $d\in  L^\times/(L^\times)^2$ and if the class  $(A,B)\in\Br L[2]$ is nonzero, then the kernel of $$\operatorname{Br} L \rightarrow \operatorname{Br} L(Y)$$ is $\Z/2\Z$.
\end{lem}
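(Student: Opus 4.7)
The plan is to reduce the assertion to a computation of the cokernel of $\Pic Y\to(\Pic\bar Y)^G$, and then to carry out that computation explicitly on the basis (\ref{basis}). Since $Y$ is a smooth projective geometrically rational surface, $\Br\bar Y=0$ and purity gives $\Br Y\hookrightarrow \Br L(Y)$; hence $\ker[\Br L\to \Br L(Y)]=\ker[\Br L\to \Br Y]$. The Hochschild--Serre spectral sequence for $\G_m$ produces the exact sequence
$$
0\to \Pic Y\to (\Pic\bar Y)^G\to \Br L\to \Br Y,
$$
so my task is to show that $\coker[\Pic Y\to(\Pic\bar Y)^G]\cong \Z/2\Z$, with the generator mapping to $(A,B)\in \Br L$. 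The last requirement is automatic once the cokernel is known to have order $2$, because $(A,B)$ was already shown to lie in the kernel of $\Br L\to \Br L(Y)$ in the first part of Proposition \ref{ABCKer} and is nonzero by hypothesis.

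Next, I identify the splitting field $K$ of the $56$ $(-1)$-curves. Their equations involve only $a$, $b$, $c$, $i$, $\sqrt 2$, all of which lie in $L(a,b,c)$ since $L\supset\mu_8$. Using $C=ABd^2$, one gets $c^4=(ab)^4 d^2$, so after an $8$th root of unity (which is in $L$) one may take $c=ab\sqrt d$, and hence $K=L(a,b,\sqrt d)$. Set $G':=\Gal(K/L)$, through which the Galois action on $\Pic\bar Y$ factors. The assumption $(A,B)\neq 0\in \Br L[2]$ forces $\sqrt A\notin L(\sqrt B)$, so $\Gal(L(\sqrt A,\sqrt B)/L)\cong(\Z/2)^2$ injects into $G'$. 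With $G'$ in hand I read off, using the explicit descriptions of the $\ell_\bullet$, how each generator of $G'$ permutes the $56$ $(-1)$-curves, and thereby how it acts on the basis $v_1,\dots,v_8$.

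Then I compute the fixed sublattice $(\Pic\bar Y)^{G'}$ and compare it with the subgroup of $\Pic Y$ coming from $L$-rational divisors. The latter contains at least the anticanonical class $-K_Y$ (pulled back from the anticanonical model $Y\to \P^2$) and the Galois trace of any $G'$-orbit of $(-1)$-curves. Following the method of Kresch--Tschinkel \cite{KT04}, one writes each $v_i$ as an integral combination of curves in single orbits, identifies the invariants as the kernel of an explicit matrix, and then quotients by the sublattice of traces to see that the quotient is exactly $\Z/2\Z$. A nonzero class in the cokernel must come from a fixed divisor class on $\bar Y$ that is not itself the class of an $L$-rational divisor; showing such a class exists (and that only one independent such class exists modulo $\Pic Y$) is precisely what the condition $C=ABd^2$ buys us, since it makes the $\ell_{\alpha,\beta,\gamma}$ orbit structure collapse.

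The main obstacle is the explicit Galois bookkeeping in the middle step: the curves $\ell_{\alpha,\beta,\gamma}$ couple all three of $a,b,c$ together with $\sqrt 2$, so tracking their $G'$-orbits and their expansions in terms of the basis $v_1,\dots,v_8$ takes real care. Everything else is formal: Hochschild--Serre, the Kummer description of $K/L$, and a trace-quotient calculation to extract the final $\Z/2\Z$.
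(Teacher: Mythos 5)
Your plan follows the paper's proof essentially verbatim: Hochschild--Serre reduces the statement to computing $\operatorname{coker}[\operatorname{Pic}Y\to(\operatorname{Pic}\bar Y)^G]$, the splitting field is $L(a,b,\sqrt d)$, and the explicit Kresch--Tschinkel bookkeeping yields a rank-$2$ invariant lattice $\Z\mu\oplus\Z(-K_Y)$ with $2\mu$ an orbit trace. The only point to keep straight when you execute it is that the trace/lattice computation only bounds the cokernel above by $\Z/2\Z$; the lower bound (nontriviality) is not a consequence of the lattice computation but of the hypothesis that $(A,B)\neq 0$ lies in the kernel of $\Br L\to\Br L(Y)$, exactly as in the paper.
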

\begin{proof}
    If $C=ABd^2$ for some $d\in L^\times/(L^\times)^2$, $ABC$ is a square in $L$, and the equations simplify to

    $$
\begin{array}{rrrr}
\ell_{t, \delta, \pm}: & \delta a u+b v=0, & w= \pm da^2b^2t^2,\quad \text { with } \delta^4=-1, \\
\ell_{u, \delta, \pm}: & \delta  v+a\sqrt{d} t=0, & w= \pm a^2 u^2, \quad \text { with } \delta^4=-1, \\
\ell_{v, \delta, \pm}: & \delta b \sqrt{d}t+ u=0, & w= \pm b^2 v^2, \quad \text { with } \delta^4=-1, \\
\ell_{\alpha, \beta, \gamma}: & \alpha a u+\beta b v+\gamma ab\sqrt{d} t=0, & w=\sqrt{2}(\alpha \beta a b u v+\beta \gamma \sqrt{d}ab^2 v t+\alpha \gamma \sqrt{d}a^2b u t), \\
\end{array}
$$
Consider the Galois extension $K:=L(a,b,\sqrt{d})$. The Galois group $\operatorname{Gal}(K/L)$ is generated by $\iota_a,\iota_b, \iota_{\sqrt{d}}$, where $\iota_{a}$ sends $a$ to $ia$ and similarly for $b$, and  $\iota_{\sqrt{d}}$ sends $\sqrt{d}$ to $-\sqrt{d}$. One has the following table:

    \begin{center} 
    \begin{tabular}{cccc}
         actions&$\iota_{a} $ &$\iota_b  $&$\iota_{\sqrt{d}} $\\
         $\ell_{t, \delta, \pm}$&$\ell_{t, i\delta, \mp}$  &$\ell_{t, -i\delta, \mp}  $& $\ell_{t, \delta, \pm}$\\
         $\ell_{u, \delta, \pm}$&$\ell_{u, -i\delta, \mp}$  &$\ell_{u, \delta, \pm}$  &$\ell_{u, -\delta, \pm}$\\
         $\ell_{v, \delta, \pm}$&$\ell_{v, \delta, \pm}$  & $\ell_{v, i\delta, \mp}$ &$\ell_{v, -\delta, \pm}$ \\
         $\ell_{\alpha, \beta, \gamma}$& $\ell_{i\alpha, \beta, i\gamma}$ &$\ell_{\alpha, i\beta, i\gamma}$  & $\ell_{\alpha, \beta, -\gamma}$\\
    \end{tabular}
    \end{center}

    In particular one can  see that   $\iota_{\sqrt{d}}=\iota_a^2\iota_b^2$ for the action on the Picard group. We can then compute actions on the basis (\ref{basis}) of $\operatorname{Pic}\bar{Y}$ and obtain:

$$\iota_a=\left(\begin{matrix}
0 & 1 & 0 & 0 & 0 & 0 & 0 & 0 \\
-1 & 0 & 0 & 0 & 0 & 0 & -1 & -1 \\
0 & 0 & 1 & 0 & 0 & 0 & 0 & 0 \\
0 & 0 & 0 & 1 & 0 & 0 & 0 & 0 \\
0 & 0 & 0 & 0 & 0 & -1 & -1 & -1 \\
0 & 0 & 0 & 0 & 1 & 0 & 0 & 0 \\
-1 & 0 & 0 & 0 & 0 & -1 & 0 & -1 \\
1 & 0 & 0 & 0 & 0 & 1 & 1 & 2 
\end{matrix}\right)$$
and
$$\iota_b=\left(\begin{matrix}
1 & 0 & 0 & 0 & 0 & 0 & 0 & 0 \\
0 & 1 & 0 & 0 & 0 & 0 & 0 & 0 \\
0 & 0 & 0 & -1 & 0 & 0 & -1 & -1 \\
0 & 0 & 1 & 0 & 0 & 0 & 0 & 0 \\
0 & 0 & 0 & 0 & 0 & 1 & 0 & 0 \\
0 & 0 & 0 & 0 & -1 & 0 & -1 & -1 \\
0 & 0 & 0 & -1 & -1 & 0 & 0 & -1 \\
0 & 0 & 0 & 1 & 1 & 0 & 1 & 2 
\end{matrix}\right)$$

The invariant space of $\iota_a$ in $\Q$ is 
$$W^{\iota_a}\otimes\Q=\left\{\left(\begin{matrix}0 \\ 0 \\ 1 \\ 0 \\ 0 \\ 0 \\ 0 \\ 0\end{matrix}\right),\left(\begin{matrix}0 \\ 0 \\ 0 \\ 1 \\ 0 \\ 0 \\ 0 \\ 0\end{matrix}\right),\left(\begin{matrix}-\frac{1}{2} \\ -\frac{1}{2} \\ 0 \\ 0 \\ -\frac{1}{2} \\ -\frac{1}{2} \\ 1 \\ 0\end{matrix}\right),\left(\begin{matrix}-\frac{1}{2} \\ -\frac{1}{2} \\ 0 \\ 0 \\ -\frac{1}{2} \\ -\frac{1}{2} \\ 0 \\ 1\end{matrix}\right)\right\}$$
and the invariant space of $\iota_b$ is 

$$W^{\iota_b}\otimes\Q=\left\{\left(\begin{matrix}1 \\ 0 \\ 0 \\ 0 \\ 0 \\ 0 \\ 0 \\ 0\end{matrix}\right),\left(\begin{matrix}0 \\ 1 \\ 0 \\ 0 \\ 0 \\ 0 \\ 0 \\ 0\end{matrix}\right),\left(\begin{matrix}0 \\ 0 \\ -\frac{1}{2} \\ -\frac{1}{2} \\ -\frac{1}{2} \\ -\frac{1}{2} \\ 1 \\ 0\end{matrix}\right),\left(\begin{matrix}0 \\ 0 \\ -\frac{1}{2} \\ -\frac{1}{2} \\ -\frac{1}{2} \\ -\frac{1}{2} \\ 0 \\ 1\end{matrix}\right)\right\}$$

Here, although the expression of the vectors involve fractions, we are only interested in linear combinations such that every entry in the result is an integer. 

We proceed to compute $(\operatorname{Pic}\bar{Y})^G$. First,  we calculate the intersection of $W^{\iota_a}\otimes\Q$ and $W^{\iota_b}\otimes\Q$. Denote by $\{u_i\}$ the basis of $W^{\iota_a}\otimes\Q$  and by $\{v_i\}$ the basis of $W^{\iota_b}\otimes\Q$. 
The kernel of the 8 by 8 matrix $[u_i,-v_i]$ is the space of coefficients which give linear combinations lying in $W^{\iota_a}\otimes\Q\cap W^{\iota_b}\otimes\Q$. We get 

$$\operatorname{Ker}[u_i,-v_i]=\left\{\left(\begin{matrix}-\frac{1}{2} \\ -\frac{1}{2} \\ 1 \\ 0 \\ -\frac{1}{2} \\ -\frac{1}{2} \\ 1 \\ 0\end{matrix}\right),\left(\begin{matrix}-\frac{1}{2} \\ -\frac{1}{2} \\ 0 \\ 1 \\ -\frac{1}{2} \\ -\frac{1}{2} \\ 0 \\ 1\end{matrix}\right)\right\}$$ 
which means the invariant spaces intersect at

$$(\operatorname{Pic}\bar{Y})^G\otimes \Q=W^{\iota_a}\otimes\Q\cap W^{\iota_b}\otimes\Q=\left\{\left(\begin{matrix}-\frac{1}{2} \\ -\frac{1}{2} \\ -\frac{1}{2} \\ -\frac{1}{2} \\ -\frac{1}{2} \\ -\frac{1}{2} \\ 1 \\ 0\end{matrix}\right),\left(\begin{matrix}-\frac{1}{2} \\ -\frac{1}{2} \\ -\frac{1}{2} \\ -\frac{1}{2} \\ -\frac{1}{2} \\ -\frac{1}{2} \\ 0 \\ 1\end{matrix}\right)\right\}$$

We are interested in linear combinations of these two vectors such that the result's entries are all integers. From  this we see that $-v_7+v_8$ is Galois stable:
$$
-v_7+v_8=-\left( \begin{matrix}-\frac{1}{2} \\ -\frac{1}{2} \\ -\frac{1}{2} \\ -\frac{1}{2} \\ -\frac{1}{2} \\ -\frac{1}{2} \\ 1 \\ 0\end{matrix}\right)+\left( \begin{matrix}-\frac{1}{2} \\ -\frac{1}{2} \\ -\frac{1}{2} \\ -\frac{1}{2} \\ -\frac{1}{2} \\ -\frac{1}{2} \\ 0 \\ 1\end{matrix}\right)
$$
Furthermore, $$-K_Y=-\left( \begin{matrix}-\frac{1}{2} \\ -\frac{1}{2} \\ -\frac{1}{2} \\ -\frac{1}{2} \\ -\frac{1}{2} \\ -\frac{1}{2} \\ 1 \\ 0\end{matrix}\right)+3\left( \begin{matrix}-\frac{1}{2} \\ -\frac{1}{2} \\ -\frac{1}{2} \\ -\frac{1}{2} \\ -\frac{1}{2} \\ -\frac{1}{2} \\ 0 \\ 1\end{matrix}\right)$$
Since $-v_7+v_8$ is not a multiple of $-K_Y$, we have $(\operatorname{Pic}\bar{Y})^G=\Z\mu\oplus \Z (-K_Y)$, where $\mu=-v_7+v_8$. 

Recall the following exact sequence:
\begin{equation}\label{ses}
0 \rightarrow \operatorname{Pic} Y \rightarrow(\operatorname{Pic} \bar{Y})^{G} \rightarrow \operatorname{Br} L \rightarrow \operatorname{Br} L(Y)
\end{equation}

From the beginning of the proof of Proposition \ref{ABCKer}, one has that the kernel of the map $\operatorname{Br} L \rightarrow \operatorname{Br} L(Y)$ is nontrivial. Hence, by \cite{CTKM07} Proposition 5.3, the only possible kernels for $\operatorname{Br} L \rightarrow \operatorname{Br} L(Y)$ (hence the cokernels for $\operatorname{Pic} Y \rightarrow(\operatorname{Pic} \bar{Y})^{G}$) are $\Z/2\Z$ and $\Z/2\Z\oplus \Z/2\Z$. 

Now, there are nonzero integers $\alpha$ and $\beta$ such that $\alpha\mu+\beta(-K_Y)\in\operatorname{Pic} Y$, since the cokernel of the map $\operatorname{Pic} Y \to (\operatorname{Pic} \bar{Y})^{G} $ is torsion.
Since $-K_Y\in \operatorname{Pic} Y$, we have $\alpha\mu$ is in $\operatorname{Pic} Y$. Take $\alpha$ minimal, then 
$$
\operatorname{Pic} Y=\Z\alpha\mu\oplus\Z(-K_Y)
$$ 
and the cokernel is just $\Z/\alpha\Z$. Explicitly, 

$$2\mu=-2v_7+2v_8=\left[\ell_{u, \zeta,+}\right]+\left[\ell_{u, \zeta^3,-}\right]+\left[\ell_{u, \zeta^5,+}\right]+\left[\ell_{u, \zeta^7,-}\right]$$ 
is in $\operatorname{Pic} Y$ but $\mu$ is not.

\end{proof}

\subsection{The case $ABC$ is not a square}

\begin{lem}\label{ABC}
    Let $Y$ be the del Pezzo surface over a field $L$, defined by
    $$
    Y:\; w^2=Au^4+Bv^4+Ct^4.
    $$
    If $ABC$ is not a square, $\operatorname{Br} L \rightarrow \operatorname{Br} L(Y)$ is injective.
\end{lem}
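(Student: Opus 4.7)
The strategy is to mirror Lemma \ref{Abd2}: apply the exact sequence (\ref{ses}) to reduce the injectivity of $\operatorname{Br} L \to \operatorname{Br} L(Y)$ to the vanishing of the cokernel of $\operatorname{Pic} Y \to (\operatorname{Pic}\bar{Y})^G$. Since $-K_Y$ always lies in $\operatorname{Pic} Y$, it suffices to show that $(\operatorname{Pic}\bar{Y})^G = \Z(-K_Y)$; in particular, that its rank is $1$.

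The hypothesis that $ABC$ is not a square in $L$ means exactly that $\sqrt{ABC}=(abc)^2\notin L$, so compared to Lemma \ref{Abd2} the Galois group $\operatorname{Gal}(L(a,b,c)/L)$ gains an extra independent generator, which for concreteness I take to be $\iota_c : c \mapsto ic$. I would first tabulate $\iota_c$ on the $56$ $(-1)$-curves by direct substitution into the defining equations, obtaining $\ell_{t,\delta,\pm}\mapsto\ell_{t,\delta,\mp}$, $\ell_{u,\delta,\pm}\mapsto\ell_{u,-i\delta,\pm}$, $\ell_{v,\delta,\pm}\mapsto\ell_{v,i\delta,\pm}$, and $\ell_{\alpha,\beta,\gamma}\mapsto\ell_{\alpha,\beta,i\gamma}$. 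Rewriting these images as integer combinations of the basis (\ref{basis}) yields the $8\times 8$ integer matrix of $\iota_c$, in direct analogy with the computation of $\iota_a$ and $\iota_b$.

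Next I would compute $W^{\iota_c}\otimes\Q$ as the kernel of $\iota_c-\operatorname{Id}$ and intersect it with the rank-$2$ space $W^{\iota_a}\otimes\Q\cap W^{\iota_b}\otimes\Q$ already spanned (over $\Q$) by $\mu = -v_7+v_8$ and $-K_Y$ in Lemma \ref{Abd2}. The expectation is that the non-square condition on $ABC$ is precisely what makes $\iota_c$ act nontrivially on $\mu$ (which tracked the relation $(abc)^2\in L$ in the previous case), while still fixing the canonical class. So the intersection collapses to $\Q(-K_Y)$, and a check that no additional integer vector survives then identifies $(\operatorname{Pic}\bar{Y})^G$ with $\Z(-K_Y)\subseteq\operatorname{Pic} Y$. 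Applying (\ref{ses}) gives injectivity of $\operatorname{Br} L\to\operatorname{Br} L(Y)$.

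The main obstacle will be the bookkeeping step of writing each image curve (e.g.\ $\ell_{v,i\zeta,+}=\ell_{v,\zeta^5,+}$, $\ell_{u,\zeta^5,-}$, the various $\ell_{\alpha,\beta,i\gamma}$) as an integer combination of the basis (\ref{basis}); this uses the $\Z$-linear relations among the $56$ $(-1)$-curves coming from the intersection form and from $-K_Y$. Once those rewritings are in place, the remaining linear algebra is mechanical and proceeds exactly as in Lemma \ref{Abd2}.
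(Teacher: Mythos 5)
Your overall strategy coincides with the paper's: compute the Galois action on the $56$ exceptional curves, show $(\operatorname{Pic}\bar{Y})^G=\Z(-K_Y)\subseteq \operatorname{Pic}Y$, and conclude via the exact sequence (\ref{ses}). Your table for $\iota_c$ also agrees exactly with the paper's. But there is one step that, as written, is not justified: you propose to intersect $W^{\iota_c}\otimes\Q$ with ``the rank-$2$ space $W^{\iota_a}\otimes\Q\cap W^{\iota_b}\otimes\Q$ already spanned by $\mu=-v_7+v_8$ and $-K_Y$ in Lemma \ref{Abd2}.'' Those fixed spaces do \emph{not} belong to the automorphisms $a\mapsto ia$ (fixing $b,c$) and $b\mapsto ib$ (fixing $a,c$) of the present lemma. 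In Lemma \ref{Abd2} the $(-1)$-curve equations were rewritten using $c=ab\sqrt{d}$, so the generator called $\iota_a$ there also sends $c\mapsto ic$; compare the two tables, e.g.\ $\ell_{t,\delta,\pm}\mapsto \ell_{t,i\delta,\mp}$ in Lemma \ref{Abd2} versus $\ell_{t,\delta,\pm}\mapsto\ell_{t,i\delta,\pm}$ here. Concretely, the matrices satisfy $\iota_a^{\mathrm{old}}=\iota_a\iota_c$ and $\iota_b^{\mathrm{old}}=\iota_b\iota_c$, and the fixed space of the genuine $\iota_a$ of this lemma is already $1$-dimensional (this is in fact all the paper uses), not the $4$-dimensional space you import.

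Your shortcut is salvageable, but only after you make the observation above: since $\langle \iota_a\iota_c,\ \iota_b\iota_c,\ \iota_c\rangle=\langle\iota_a,\iota_b,\iota_c\rangle$, the triple intersection $W^{\iota_a^{\mathrm{old}}}\cap W^{\iota_b^{\mathrm{old}}}\cap W^{\iota_c}$ does compute $(\operatorname{Pic}\bar{Y})^G$, and one checks readily that $\iota_c$ sends $\mu=[\ell_{t,\zeta^7,-}]+[\ell_{t,\zeta^3,-}]$ to $[\ell_{t,\zeta^7,+}]+[\ell_{t,\zeta^3,+}]=2(-K_Y)-\mu$, so no combination $\alpha\mu+\beta(-K_Y)$ with $\alpha\neq 0$ survives and the intersection collapses to $\Q(-K_Y)$, as you predicted. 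Without that group-theoretic justification, the step is a genuine gap, because you would be intersecting fixed spaces of elements that do not generate (and are not contained in the obvious way in) the Galois group at hand. The paper sidesteps all of this by recomputing the three matrices in the new coordinates and noting that $\ker(\iota_a-I)$ alone is already $\Q(-K_Y)$. Finally, note that both you and the paper implicitly assume the full group $\langle\iota_a,\iota_b,\iota_c\rangle$ acts, which is stronger than the stated hypothesis that $ABC$ is not a square; since the paper makes the same assumption, I do not count this against you.
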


\begin{proof}
    Consider the Galois extension $K:=L(a,b,c)$. The Galois group $\operatorname{Gal}(K/L)$ is the abelian group generated by $\iota_a,\iota_b$, and $\iota_c$.
    \begin{center} 
    \begin{tabular}{cccc}
         actions&$\iota_{a} $ &$\iota_b  $&$\iota_{c} $\\
         $\ell_{t, \delta, \pm}$&$\ell_{t, i\delta, \pm}$  &$\ell_{t, -i\delta, \pm}  $& $\ell_{t, \delta, \mp}$\\
         $\ell_{u, \delta, \pm}$&$\ell_{u, \delta, \mp}$  &$\ell_{u, i\delta, \pm}$  &$\ell_{u, -i\delta, \pm}$\\
         $\ell_{v, \delta, \pm}$&$\ell_{v, -i\delta, \pm}$  & $\ell_{v, \delta, \mp}$ &$\ell_{v, i\delta, \pm}$ \\
         $\ell_{\alpha, \beta, \gamma}$& $\ell_{i\alpha, \beta, \gamma}$ &$\ell_{\alpha, i\beta, \gamma}$  & $\ell_{\alpha, \beta, i\gamma}$\\
    \end{tabular}
    \end{center}
$$\iota_a= \left(\begin{matrix}
-2 & -1 & -1 & -1 & -1 & -1 & -1 & -3 \\
-1 & -2 & -1 & -1 & -1 & -1 & -1 & -3 \\
-1 & -1 & -1 & -2 & -1 & -1 & -1 & -3 \\
-1 & -1 & 0 & -1 & -1 & -1 & 0 & -2 \\
-1 & -1 & -1 & -1 & -1 & 0 & 0 & -2 \\
-1 & -1 & -1 & -1 & -2 & -1 & -1 & -3 \\
-1 & -1 & 0 & -1 & -1 & 0 & -1 & -2 \\
3 & 3 & 2 & 3 & 3 & 2 & 2 & 7 
\end{matrix} \right) $$

$$\iota_b= \left(\begin{matrix}
-1 & 0 & -1 & -1 & -1 & -1 & 0 & -2 \\
-2 & -1 & -1 & -1 & -1 & -1 & -1 & -3 \\
-1 & -1 & -2 & -1 & -1 & -1 & -1 & -3 \\
-1 & -1 & -1 & -2 & -1 & -1 & -1 & -3 \\
-1 & -1 & -1 & -1 & -1 & -2 & -1 & -3 \\
-1 & -1 & -1 & -1 & 0 & -1 & 0 & -2 \\
-1 & 0 & -1 & -1 & 0 & -1 & -1 & -2 \\
3 & 2 & 3 & 3 & 2 & 3 & 2 & 7 
\end{matrix} \right) $$

and 
$$\iota_{c}= \left(\begin{matrix}
-1 & -2 & -1 & -1 & -1 & -1 & -1 & -3 \\
0 & -1 & -1 & -1 & -1 & -1 & 0 & -2 \\
-1 & -1 & -1 & 0 & -1 & -1 & 0 & -2 \\
-1 & -1 & -2 & -1 & -1 & -1 & -1 & -3 \\
-1 & -1 & -1 & -1 & -2 & -1 & -1 & -3 \\
-1 & -1 & -1 & -1 & -1 & -2 & -1 & -3 \\
0 & -1 & -1 & 0 & -1 & -1 & -1 & -2 \\
2 & 3 & 3 & 2 & 3 & 3 & 2 & 7 
\end{matrix} \right) $$

Calculating the kernel of $\iota_a-I_{8\times8}$ we see that the invariant space of $\iota_a$ is the 1-dimensional space generated by $-K_Y$. Since $-K_Y\in \operatorname{Pic}Y$ is fixed by the Galois action, we conclude that $(\operatorname{Pic}\bar{Y})^G=\Z(-K_Y)$. From the exact sequence $(\ref{ses})$ we see that $\operatorname{Br} L \rightarrow \operatorname{Br} L(Y)$ is injective.
\end{proof}

\printbibliography

@article{HPT18,
    author = "B. Hassett and A. Pirutka and Yu. Tschinkel",
    title = "Stable rationality of quadric surface bundles over surfaces",
    journal = "Acta Mathematica",
    volume = "220",
    pages = "341-365",
    year = "2018",
}

@unpublished{P23,
    author = "A. Pirutka",
    title = "Cubic surface bundles and the Brauer group",
    year="2023",
    type="Preprint"
}

@article{ABP18,
    author = "A. Auel and C. Böhning and A. Pirutka",
    title = "Stable rationality of quadric and cubic surface bundle fourfolds",
    journal = "Eur. J. Math.",
    volume = "4",
    pages = "732–760 ",
    year = "2018",
}

@article{S18,
    author = "S. Schreieder",
    title = "Quadric surface bundles and stable rationality",
    journal = "Algebra \& Number Theory",
    volume = "12",
    pages = "479--490",
    year = "2018",
}

@article{S19A,
    author = "S. Schreieder",
    title = "On the rationality problem for quadric bundles",
    journal = "Duke Math. J.",
    volume = "168",
    pages = "187--223",
    year = "2019",
}

@article{AO16,
    author = "H. Ahmadinezhad and T. Okada",
    title = "Stable rationality of higher dimensional conic bundles",
    journal = "Épijournal Géom. Algébrique",
    volume = "2",
    number="5",
    year = "2018",
}

@article{CTO89,
    author = "J.-L. Colliot-Th\'{e}l\`{e}ne and M. Ojanguren",
    title = "Variétés unirationnelles non rationnelles: au-delà de l'exemple d'Artin et Mumford",
    journal = "Invent. math.",
    volume = "97",
    pages="141–158",
    year = "1989",
}

@article{AM72,
    author = "M. Artin and D. Mumford",
    title = "Some elementary examples of unirational varieties which are not rational",
    journal = "Proc. London Math. Soc.",
    volume = "3",
    number="25",
    pages="75-95",
    year = "1972",
}

@article{V15,
    author = "C. Voisin",
    title = "Unirational threefolds with no universal codimension 2 cycle",
    journal = "Invent. math.",
    volume = "201",
    pages="207–237",
    year = "2015",
}

@article {CTP16A,
    AUTHOR = {J.-L. Colliot-Th\'{e}l\`{e}ne and A. Pirutka},
     TITLE = {Hypersurfaces quartiques de dimension 3: non-rationalit\'e{}
              stable},
   JOURNAL = {Ann. Sci. \'Ec. Norm. Sup\'er. (4)},
  FJOURNAL = {Annales Scientifiques de l'\'Ecole Normale Sup\'erieure.
              Quatri\`eme S\'erie},
    VOLUME = {49},
      YEAR = {2016},
    NUMBER = {2},
     PAGES = {371--397},
}

@article {CTP16B,
    AUTHOR = {J.-L. Colliot-Th\'{e}l\`{e}ne and A. Pirutka},
     TITLE = {Cyclic covers that are not stably rational},
   JOURNAL = {Izv. Ross. Akad. Nauk Ser. Mat.},
  FJOURNAL = {Izvestiya Rossiiskoi Akademii Nauk. Seriya Matematicheskaya},
    VOLUME = {80},
      YEAR = {2016},
    NUMBER = {4},
     PAGES = {35--48},
   MRCLASS = {14E08 (14C15 14E20 14M20)},
}

@article {T16,
    AUTHOR = {B. Totaro},
     TITLE = {Hypersurfaces that are not stably rational},
   JOURNAL = {J. Amer. Math. Soc.},
  FJOURNAL = {Journal of the American Mathematical Society},
    VOLUME = {29},
      YEAR = {2016},
    NUMBER = {3},
     PAGES = {883--891},
   MRCLASS = {14E08 (14J45 14J70)},
  MRNUMBER = {3486175},
MRREVIEWER = {Andreas\ H\"oring},
}

@article {NO22,
    AUTHOR = {Nicaise, J. and Ottem, J. -C.},
     TITLE = {Tropical degenerations and stable rationality},
   JOURNAL = {Duke Math. J.},
  FJOURNAL = {Duke Mathematical Journal},
    VOLUME = {171},
      YEAR = {2022},
    NUMBER = {15},
     PAGES = {3023--3075},
   MRCLASS = {14T20 (14D06 14E08 14M25)},
  MRNUMBER = {4497222},
MRREVIEWER = {Alexandr\ V.\ Pukhlikov},
}

@article {S19B,
    AUTHOR = {Schreieder, S.},
     TITLE = {Stably irrational hypersurfaces of small slopes},
   JOURNAL = {J. Amer. Math. Soc.},
  FJOURNAL = {Journal of the American Mathematical Society},
    VOLUME = {32},
      YEAR = {2019},
    NUMBER = {4},
     PAGES = {1171--1199},
   MRCLASS = {14E08 (14J70 14M20)},
  MRNUMBER = {4013741},
MRREVIEWER = {Andrey\ S.\ Trepalin},
}

@article {O19,
    AUTHOR = {Okada, T.},
     TITLE = {Stable rationality of cyclic covers of projective spaces},
   JOURNAL = {Proc. Edinb. Math. Soc. (2)},
  FJOURNAL = {Proceedings of the Edinburgh Mathematical Society. Series II},
    VOLUME = {62},
      YEAR = {2019},
    NUMBER = {3},
     PAGES = {667--682},
   MRCLASS = {14E08 (14C15 14G17)},
  MRNUMBER = {3974960},
MRREVIEWER = {Carla\ Novelli},
}

@article {HKT16,
    AUTHOR = {Hassett, B. and Kresch, A. and Tschinkel, Yu.},
     TITLE = {Stable rationality and conic bundles},
   JOURNAL = {Math. Ann.},
  FJOURNAL = {Mathematische Annalen},
    VOLUME = {365},
      YEAR = {2016},
    NUMBER = {3-4},
     PAGES = {1201--1217},
   MRCLASS = {14E08 (14J60 14M20)},
  MRNUMBER = {3521088},
MRREVIEWER = {Daniel\ Naie},
}

@article {KO20,
    AUTHOR = {Krylov, I. and Okada, T.},
     TITLE = {Stable rationality of del {P}ezzo fibrations of low degree
              over projective spaces},
   JOURNAL = {Int. Math. Res. Not. IMRN},
  FJOURNAL = {International Mathematics Research Notices. IMRN},
      YEAR = {2020},
    NUMBER = {23},
     PAGES = {9075--9119},
   MRCLASS = {14J26 (14D06)},
  MRNUMBER = {4182791},
MRREVIEWER = {Quanting\ Zhao},
}

@incollection {CT92,
    AUTHOR = {Colliot-Th\'el\`ene, J.-L.},
     TITLE = {Birational invariants, purity and the {G}ersten conjecture},
 BOOKTITLE = {{$K$}-theory and algebraic geometry: connections with
              quadratic forms and division algebras ({S}anta {B}arbara,
              {CA}, 1992)},
    SERIES = {Proc. Sympos. Pure Math.},
    VOLUME = {58, Part 1},
     PAGES = {1--64},
 PUBLISHER = {Amer. Math. Soc., Providence, RI},
      YEAR = {1995},
   MRCLASS = {14F20 (14C25)},
  MRNUMBER = {1327280},
MRREVIEWER = {Luca\ Barbieri Viale},
shorthand={CT95}
}

@article {KT04,
    AUTHOR = {Kresch, A. and Tschinkel, Yu.},
     TITLE = {On the arithmetic of del {P}ezzo surfaces of degree 2},
   JOURNAL = {Proc. London Math. Soc. (3)},
  FJOURNAL = {Proceedings of the London Mathematical Society. Third Series},
    VOLUME = {89},
      YEAR = {2004},
    NUMBER = {3},
     PAGES = {545--569},
   MRCLASS = {14G25 (14F22)},
  MRNUMBER = {2107007},
MRREVIEWER = {Timothy\ D.\ Browning},
}

@article {CTKM07,
    AUTHOR = {Colliot-Th\'el\`ene, J.-L. and Karpenko, N. A. and
              Merkurjev, A. S.},
     TITLE = {Rational surfaces and the canonical dimension of the group
              {${\rm PGL}_6$}},
   JOURNAL = {Algebra i Analiz},
  FJOURNAL = {Rossi\u iskaya Akademiya Nauk. Algebra i Analiz},
    VOLUME = {19},
      YEAR = {2007},
    NUMBER = {5},
     PAGES = {159--178},
   MRCLASS = {20G15 (14J26)},
  MRNUMBER = {2381945},
MRREVIEWER = {Boris\ \`E.\ Kunyavski\u i}
}

@book{CTS21,
    title = {The Brauer–Groth-endieck Group},
    author = {J.-L. Colliot-Thélène and A. N. Skorobogatov},
    series = {Modern Surveys in Mathematics},
    year = {2021},
    publisher = {Springer Cham}
}

@book {SGA5,
     TITLE = {Cohomologie {$l$}-adique et fonctions {$L$}},
    SERIES = {Lecture Notes in Mathematics},
    VOLUME = {589},
      NOTE = {S\'eminaire de G\'eometrie Alg\'ebrique du Bois-Marie
              1965--1966 (SGA 5),
              Edit\'e{} par Luc Illusie},
 PUBLISHER = {Springer-Verlag, Berlin-New York},
      YEAR = {1977},
shorthand={SGA5}
}

\end{document}